\newtheorem{theorem}{Theorem}[section]
\newtheorem{lemma}[theorem]{Lemma}
\newtheorem{conjecture}[theorem]{Conjecture}
\newtheorem{proposition}[theorem]{Proposition}
\newtheorem{definition}[theorem]{Definition}
\newtheorem{example}[theorem]{Example}
\newtheorem{remark}[theorem]{Remark}
\newcommand{\re}{\mathbb{R}}
\newcommand{\vc}[1]{{\mathbf{#1}}}
\newcommand{\Aalpha}[1]{\alpha}
\begin{document}

\makeatletter
\title{A proof of the Shepp--Olkin entropy  \\ monotonicity conjecture}
\author{Erwan Hillion\thanks{Aix Marseille Univ, CNRS, Centrale Marseille, I2M, Marseille, France } \and Oliver Johnson\thanks{School of Mathematics, University of Bristol, University Walk, Bristol, BS8 1TW, UK. Email {\tt maotj@bristol.ac.uk}}
}
\date{\today}
\maketitle
\makeatother

\begin{abstract} \noindent Consider tossing a collection of coins, each fair or biased towards heads, and take the distribution of the total number of heads that result. It is natural to conjecture that this distribution should be `more random' when each coin is fairer. Indeed, Shepp and Olkin conjectured  that the Shannon entropy of this distribution is monotonically increasing in this case. We resolve this conjecture, by proving that this intuition is correct. Our proof uses a construction which was previously developed by the authors to prove a related conjecture of Shepp and Olkin concerning concavity of entropy. We discuss whether this result can be generalized to $q$-R\'{e}nyi and $q$-Tsallis entropies, for a range of values of $q$.
\end{abstract}

Keywords: entropy; functional inequalities; mixing coefficients;  Poisson--binomial

\section{Introduction and notation}

In this paper, we consider the entropy of Poisson--binomial random variables (sums of independent Bernoulli random variables). 
Given parameters $\vc{p} = (p_1, \ldots, p_n)$ (where $0 \leq p_i \leq 1$)
we will write $f_{\vc{p}}$ for the probability mass function of the random variable
$B_1 + \ldots + B_n$, where $B_i$ are independent with $B_i \sim$ Bernoulli$(p_i)$. We can write the Shannon entropy as a function of the parameters:
\begin{equation} \label{eq:entdef}  H( \vc{p}) = - \sum_{k=0}^n f_{\vc{p}}(k) \log f_{\vc{p}}(k). \end{equation}
Shepp and Olkin \cite{shepp} made the following conjecture ``on the basis of numerical calculations and verification in the special cases
$n=2,3$'':  
\begin{conjecture}[Shepp--Olkin monotonicity conjecture] \label{conj:somon} If
 all $p_i \leq 1/2$ then $H(\vc{p})$ is a non-decreasing function of $\vc{p}$. \end{conjecture}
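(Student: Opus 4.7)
The plan is to prove $\partial H/\partial p_1 \geq 0$ throughout the slab $\{\vc{p} : p_j \leq 1/2 \text{ for all } j\}$; by the symmetry of $H$ under permutation of its arguments this yields monotonicity in every coordinate and hence the conjecture. Writing $g$ for the probability mass function of the Poisson--binomial of the remaining $n-1$ coins, we have $f_{\vc{p}}(k) = (1-p_1)g(k) + p_1 g(k-1)$, and differentiating \eqref{eq:entdef} (using $\sum_k(g(k)-g(k-1))=0$ to kill the constant term) yields
\[
\frac{\partial H}{\partial p_1}(\vc{p}) \;=\; \sum_{k} \bigl(g(k)-g(k-1)\bigr)\log f_{\vc{p}}(k) \;=\; D(g'\,\|\,f_{\vc{p}}) - D(g\,\|\,f_{\vc{p}}),
\]
where $g'(k) := g(k-1)$ is the one-step shift and $D$ denotes relative entropy. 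Since $f_{\vc{p}}$ is the mixture of $g$ and $g'$ with weights $1-p_1$ and $p_1$, in the regime $p_1 \leq 1/2$ the mixture should sit closer to $g$, making this difference nonnegative; the task is to make this precise.

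Next I would invoke the Shepp--Olkin concavity theorem, established in our companion paper via the $\alpha$-interpolation construction. In particular, for fixed $p_2,\ldots,p_n$ the map $p_1 \mapsto H(\vc{p})$ is concave, so $p_1 \mapsto \partial H/\partial p_1$ is non-increasing on $[0,1]$. Hence the desired inequality throughout $\{p_1 \leq 1/2\}$ reduces to its single instance at the endpoint $p_1 = 1/2$, where $f_{\vc{p}}(k) = (g(k)+g(k-1))/2$. The problem thus collapses to the following boundary inequality, phrased purely in terms of $g$:
\[
\Phi(g) \;:=\; \sum_{k}\bigl(g(k)-g(k-1)\bigr)\log\frac{g(k)+g(k-1)}{2} \;\geq\; 0,
\]
to be proved for every Poisson--binomial probability mass function $g$ whose $n-1$ parameters lie in $[0,1/2]$.

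The main obstacle is establishing $\Phi(g) \geq 0$. A summation by parts rewrites it as $\Phi(g) = \sum_k g(k)\log\bigl((g(k)+g(k-1))/(g(k+1)+g(k))\bigr)$, but the logarithm changes sign around the mode of $g$, and log-concavity of $g$ alone is insufficient: the hypothesis $p_j \leq 1/2$ has not yet been used. My plan is to attack $\Phi(g) \geq 0$ with the same $\alpha$-interpolation machinery that drove the concavity proof, applied now to interpolate between $g$ and a reference distribution for which $\Phi$ is manifestly nonnegative (the most natural candidate being $\delta_0$, reached by sending all $p_j \to 0$). The strategy is to express $\Phi(g)$ as an integral along this path of a derivative whose sign at each instant is controlled by the parameters via the same Bakry--\'Emery-style identity that previously supplied the second-order estimate. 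The technical heart of the argument, and where I expect the main difficulty, is verifying that the interpolation machinery which originally produced a second-order inequality in the concavity paper also delivers the required first-order sign here, using precisely the hypothesis $p_j \leq 1/2$ at each step.
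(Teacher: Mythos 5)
Your reductions are sound and match the paper's: by symmetry it suffices to show $\partial H/\partial p_1 \geq 0$, and concavity of $H$ in the single coordinate $p_1$ (equation \eqref{eq:concave}, already in \cite[Theorem 2]{shepp}) collapses the problem to the boundary inequality $\Phi(g) \geq 0$ at $p_1 = 1/2$. But everything up to that point was essentially available in 1978; the entire difficulty of the conjecture lives in proving $\Phi(g)\geq 0$, and that is exactly the step you leave as a plan. Worse, the plan as stated cannot work. You propose to interpolate from $\delta_0$ to $g$ and show that the derivative of $\Phi$ along the path has a fixed sign. However $\Phi(\delta_0)=0$ and, by Example \ref{ex:binomial} (equivalently the equality case of Theorem \ref{thm:somon}), $\Phi$ also vanishes when $g$ is Binomial$(n-1,1/2)$, while being strictly positive at intermediate parameter values. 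So along the natural path from all parameters $0$ to all parameters $1/2$, the quantity $\Phi$ rises from $0$ and returns to $0$: its derivative along the path necessarily changes sign, and no first-order monotone-interpolation argument of the kind you describe can close the gap. The sign of $\frac{d}{ds}\Phi(g_s)$ is a mixed second partial $\partial^2 H/\partial p_j\,\partial p_1$, which is not controlled by concavity (negative semidefiniteness of the Hessian says nothing about off-diagonal signs).

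The paper's route is different in kind: rather than writing $f$ in terms of $g$, it inverts the relationship, writing $g(k) = \alpha_{k+1}f(k+1) + (1-\alpha_k)f(k)$ with mixing coefficients $\alpha_k = g(k-1)/(2f(k))$. This turns the boundary derivative into $2\sum_k f(k)\psi(\alpha_k)$ for an explicit function $\psi$ whose power series about $1/2$ has only odd terms with negative coefficients (Proposition \ref{prop:key}), reducing the claim to the nonpositivity of every odd centered moment $\sum_k f(k)(\alpha_k - 1/2)^{2r+1}$. The hypothesis $p_i \leq 1/2$ enters only through the structural fact that $\alpha_{k+1}-\alpha_k$ is decreasing in $k$ (Proposition \ref{prop:dec}), and the moment inequalities are then established by an iterated discrete integration-by-parts and telescoping argument (Proposition \ref{prop:SrpIncreasing}), which also pins down the equality case. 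If you want to salvage your approach, you would need to identify a monotone quantity along the path that is not $\Phi$ itself; as written, the proposal contains the easy half of the argument and a strategy for the hard half that is refuted by the known equality cases.
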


 The main contribution of the present paper
is to prove that this conjecture is correct, and to give conditions for equality (see Theorem \ref{thm:somon}). The heart of our argument will be a representation of one probability
distribution in terms of another, via so-called mixing coefficients $\alpha$ 
(see Definition \ref{def:alphadef}) previously introduced in \cite{johnson34} in a form motivated by optimal 
transport. The key property of these coefficients is that their differences are decreasing functions (see Proposition \ref{prop:dec}). We  analyse the resulting expressions using  elementary tools; specifically the interplay between the product rule \eqref{eq:NablaProduct} for discrete differentiation and
an integration by parts formula, Lemma \ref{lem:intparts}. It may be surprising that only such simple tools are required to resolve this conjecture, however we remark that the interplay between product rules and integration by parts lies behind much of the power of the Bakry-\'{E}mery $\Gamma_2$-calculus for continuous random variables (as alluded to in the 
dedication of \cite{bakry2}). This may suggest that the Shepp--Olkin conjecture should be viewed in the context of an emerging discrete Bakry-\'{E}mery theory (see also \cite{caputo, johnson38}).

Given the naturalness of Conjecture \ref{conj:somon}, and its simplicity of statement, it is perhaps surprising that it has remained
open for over 40 years (although \cite{shepp} was published as a book chapter in 1981, the conjecture was first formulated in the
corresponding technical report \cite{shepp2} in July 1978). Indeed, we are not even aware of any published work that resolves special cases.

In the same paper \cite{shepp},  Shepp and Olkin also conjectured that the entropy $H(\vc{p})$ is concave in $\vc{p} \in [0,1]^n$, which was proved in \cite{johnson34} and \cite{johnson36}, using constructions based on optimal transport which we will describe below.  Similarly to the monotonicity conjecture, little published work had previously addressed the entropy concavity conjecture, though limited progress in special cases had been made in \cite{johnsonc6} and \cite{hillion2}.

In their original paper \cite{shepp}, Shepp and Olkin did prove some related results. In particular, they showed \cite[Theorem 1]{shepp} that the entropy
$H(\vc{p})$ is Schur concave in $\vc{p}$, and hence deduced a maximum entropy property for binomial random variables (see also contemporary work of Mateev \cite{mateev}, as well as later extensions by \cite{harremoes, yu}). Further, they showed \cite[Theorem 2]{shepp} that the entropy of $H(\vc{p})$ is concave in a single argument $p_i$ (see \eqref{eq:concave} below)  and \cite[Theorem 4]{shepp} that the entropy of the 
binomial distribution is concave. The Schur concavity property of Shepp and Olkin was generalized to a wider range of functionals
and parametric families including negative binomials by Karlin and Rinott \cite{karlin}.

However, none of these results appear to be particularly relevant to the monotonicity conjecture, Conjecture \ref{conj:somon}.
We can reformulate the conjecture to say that if $p_i \leq q_i \leq 1/2$ for each $i = 1, 2, \ldots n$ then $H(\vc{p}) \leq H(\vc{q})$. Clearly, by symmetry of the arguments,  it is sufficient to verify this in the case where $p_i = q_i$ for $i = 1, 2, \ldots n-1$, which in turn means
that  it is enough to check that $\frac{\partial}{\partial p_n} H(\vc{p}) \geq 0$. Without loss of generality we will assume throughout that all $p_i$ are non-zero, so that $f(k)$ is non-zero for all $k$ in $\{ 0, 1, \ldots, n \}$.

In this case, following standard calculations in \cite{shepp}, we can take $\vc{p}(t) = (p_1, \ldots, p_{n-1}, p_n(t))$  where $p_n(t) = p_n + t$,  omit the subscript on $f_{\vc{p}(t)}$ for brevity and write
\begin{equation}
 \frac{\partial f}{\partial t}(k)  = g(k-1) - g(k), \mbox{ \;\;\; for $k = 0, 1, \ldots, n$,} \label{eq:fder}
\end{equation}
where $g$ is the probability mass function of $B_1 + \ldots + B_{n-1}$, which is supported on the set $\{0, \ldots, n-1 \}$ and does not depend on $t$. Here and throughout we take $g(-1) = g(n) = 0$ if necessary.

As in \cite[Theorem 2]{shepp} we can use \eqref{eq:fder} to evaluate the first two derivatives of $H(\vc{p}(t))$ as a function of $t$. Direct substitution gives
\begin{align}
\frac{\partial H}{\partial t} & = \sum_{k=0}^n \left( g(k) - g(k-1) \right) \log f(k), \label{eq:todeal} \\
\frac{\partial^2 H}{\partial t^2} & =  - \sum_{k=0}^n \frac{ \left( g(k) - g(k-1) \right)^2}{ f(k)}. \label{eq:concave}
\end{align} 
The negativity of each term in \eqref{eq:concave} tells us directly that (as proved in \cite[Theorem 2]{shepp}) the entropy $H(\vc{p}(t))$ is concave in $t$ (of course, we also know this from the full Shepp--Olkin theorem proved in \cite{johnson34, johnson36}) so it is sufficient to prove that the derivative $\frac{\partial H}{\partial t}$ is non-negative in the case $p_n = 1/2$, since the derivative is therefore larger for any smaller values of $p_n$.

However, at this stage, further progress is elusive. Considering convolution with $B_n$ means that we can express $f(k) = (g(k) + g(k-1))/2$. However, substituting this in \eqref{eq:todeal} does not suggest an obvious way forward in general, though it is possible to use the resulting formula to resolve certain special cases. For example, careful cancellation in the case where $p_1 = p_2 = \ldots = p_{n-1} = 1/2$
and hence $g$ is binomial allows us to deduce that, in this case, the entropy derivative \eqref{eq:todeal} equals zero (see Example \ref{ex:binomial} below for an alternative view of this). However, this calculation does not give any
particular insight into why the binomial case might be extreme in the sense of the conjecture. 

Instead of expressing $f$ as a linear combination of $g$, our key observation  is that  we can express $g$ as a weighted linear combination of $f$, as described in the following section.

\section{Entropy derivative and mixing coefficients}

The following construction and  notation were introduced in \cite{johnson34}, based on the `hypergeometric thinning' construction of Yu \cite[Definition 2.2]{yu}. The key is to observe that in general we can write
\begin{equation}
g(k)  = \alpha_{k+1} f(k+1) + (1-\alpha_k) f(k), \label{eq:lincomb} 
\end{equation}
for certain `mixing coefficients' $ (\alpha)_{k=0, 1, \ldots, n}$. The general construction for  $(\alpha)_{k=0, 1, \ldots, n}$ 
in the case of Shepp--Olkin paths  is given in \cite[Proposition 5.1]{johnson34}, but
in the specific case where only $p_n$ varies, in the case $p_n = 1/2$, we can simply
define the following values:
\begin{definition} \label{def:alphadef} For $k = 0, \ldots, n$, define
\begin{equation} \label{eq:alphadef}
\alpha_{k} :=  \frac{g(k-1)}{2 f(k)} = \frac{g(k-1)}{g(k-1) + g(k)}. \end{equation}
\end{definition}
 Here, by by direct calculation using \eqref{eq:alphadef} and the fact that $g(k)^2 > g(k-1) g(k+1)$
(see \cite{niculescu}), the mixing coefficients satisfy
\begin{equation} \label{eq:alphamon}
0 = \alpha_0 < \alpha_1 <  \alpha_2 < \ldots < \alpha_n = 1. \end{equation}
In \cite[Proposition 5.2]{johnson34} this result was stated in the form $\alpha_{k-1} \leq \alpha_k$, but the strict inequalities
will help us to resolve the case of equality in Conjecture \ref{conj:somon}.
It will often be useful for us to observe that Definition \ref{def:alphadef} implies that for $k= 0, 1, \ldots, n$
\begin{equation}
g(k)  = 2 \alpha_{k+1} f(k+1) = 2 (1-\alpha_k) f(k), \label{eq:alphavals}
\end{equation}
and that for $k= 0, 1, \ldots, n-1$
\begin{equation}
\alpha_{k+1} g(k+1) = (1-\alpha_{k+1}) g(k). \label{eq:alphavals2}
\end{equation}

\begin{remark} Summing \eqref{eq:alphavals}, we can directly calculate that
\begin{equation} \label{eq:alphamean} \sum_{k=0}^n f(k) \alpha_k = \frac{1}{2},\end{equation}
which will play an important role in our proof of Conjecture \ref{conj:somon}. Further, it is interesting to note by rearranging \eqref{eq:alphadef} that $\alpha_k \leq 1/2$ if and only if $g(k-1) \leq g(k)$, which
by the unimodality of $g$ (see for example \cite{niculescu}) means that $k \leq {\rm mode}(g)$. This may suggest that the Shepp-Olkin conjecture can be understood as relating
to the skewness of the random variables $g$. Direct calcuation shows that the centred third moment of $B_1 + \ldots + B_{n-1} $ is $\sum_{i=1}^{n-1} p_i (1-p_i)(1-2p_i) \geq 0$, but it is not immediately clear how this positive skew will affect the entropy of $f$.
\end{remark}

\begin{remark}
In \cite{johnson34} we used these mixing coefficients $(\alpha)_{k=0, 1, \ldots, n}$ to formulate a discrete analogue of the Benamou--Brenier formula
\cite{benamou} from optimal transport theory, which gave an understanding of certain interpolation paths of discrete probability measures
(including Shepp--Olkin paths) as geodesics in a metric space. We do not require this interpretation here, but simply
study the properties of $\alpha_k$ in their own right.
\end{remark}
We now define a function which will form the basis of our proof of Conjecture \ref{conj:somon}:
\begin{definition} \label{def:varphisign} 
Define
$$
\psi(\alpha) := \alpha \log \alpha - (1-\alpha) \log (1-\alpha) - (2 - 2 \log 2)(\alpha - 1/2),
$$
where we take $0 \log 0 = 0$ to ensure that $\psi$ is continuous at $0$ and at $1$.
\end{definition}

\begin{remark} \label{rem:psiprops} \mbox{ }
\begin{enumerate}
\item
Note that  we can express 
\begin{equation} \label{eq:powerseries}
\psi(\alpha) = - \sum_{r=1}^\infty \frac{(2 \alpha - 1)^{2r+1}}{2r(2r+1)}, \end{equation}
as a power series with only odd terms  with all coefficients negative. Further, comparison with $\sum_{r=1}^\infty
\frac{1}{2r(2r+1)} = 1 - \log 2 < \infty$ shows that this series converges absolutely for all $\alpha \in [0,1]$.
\item Hence the $\psi$ is non-increasing and antisymmetric about $1/2$ (with 
$\psi(\alpha) = - \psi(1-\alpha)$) and with $\psi(0)  = 1 - \log 2 > 0$, $\psi(1/2) = 0$ and $\psi(1) = -(1 - \log 2) < 0$.
\end{enumerate}
\end{remark}

We can express the derivative of entropy in terms of these functions and the mixing coefficients, as follows:

\begin{proposition} \label{prop:key}
\begin{equation} \label{eq:key} \frac{\partial H}{\partial t}  = 2 \sum_{k=0}^n f(k) \psi(\alpha_k). \end{equation}
Hence by \eqref{eq:powerseries}, the entropy derivative \eqref{eq:todeal} is positive  if each of the odd centred moments $ \sum_{k=0}^n f(k)  (\alpha_k - 1/2)^{2r+1} \leq 0$,
for $r = 1,2, \ldots$.
\end{proposition}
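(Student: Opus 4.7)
The plan is to start from the expression \eqref{eq:todeal} for the entropy derivative and use discrete summation by parts to shift the discrete differentiation off the $g$ factor and onto the $\log f$ factor. Concretely, since $g(-1) = g(n) = 0$, a routine reindexing gives
\begin{equation*}
\frac{\partial H}{\partial t} = \sum_{k=0}^n (g(k) - g(k-1)) \log f(k) = \sum_{k=0}^{n-1} g(k) \log \frac{f(k)}{f(k+1)}.
\end{equation*}
The two identities in \eqref{eq:alphavals} immediately yield $f(k)/f(k+1) = \alpha_{k+1}/(1-\alpha_k)$, so after splitting the logarithm we obtain
\begin{equation*}
\frac{\partial H}{\partial t} = \sum_{k=0}^{n-1} g(k) \log \alpha_{k+1} - \sum_{k=0}^{n-1} g(k) \log(1-\alpha_k).
\end{equation*}

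The main idea is now to evaluate each of these two sums using the appropriate half of \eqref{eq:alphavals}. In the first sum I would substitute $g(k) = 2\alpha_{k+1} f(k+1)$ and then reindex $k \mapsto k-1$; in the second I would substitute $g(k) = 2(1-\alpha_k) f(k)$ directly. Because $\alpha_0 = 0$ and $\alpha_n = 1$, the boundary terms $\alpha_0 \log \alpha_0$ and $(1-\alpha_n) \log(1-\alpha_n)$ vanish (interpreted by continuity), so the limits of summation can be extended to run from $0$ to $n$. This produces the clean expression
\begin{equation*}
\frac{\partial H}{\partial t} = 2 \sum_{k=0}^n f(k) \left[ \alpha_k \log \alpha_k - (1-\alpha_k) \log(1-\alpha_k) \right].
\end{equation*}

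To recover the statement of the proposition, I would finally add in the linear correction appearing in Definition \ref{def:varphisign}. The point is that, by the mean identity \eqref{eq:alphamean},
\begin{equation*}
\sum_{k=0}^n f(k) (2-2\log 2)(\alpha_k - 1/2) = (2-2\log 2)\left( \tfrac{1}{2} - \tfrac{1}{2} \right) = 0,
\end{equation*}
so subtracting this zero quantity term by term converts the bracket above into exactly $\psi(\alpha_k)$. This gives \eqref{eq:key}. The second statement then follows immediately: substituting the power series \eqref{eq:powerseries} into \eqref{eq:key}, swapping sum and summation (justified by absolute convergence, which is stated in Remark \ref{rem:psiprops}) yields
\begin{equation*}
\frac{\partial H}{\partial t} = -2 \sum_{r=1}^\infty \frac{1}{2r(2r+1)} \sum_{k=0}^n f(k) (2\alpha_k - 1)^{2r+1},
\end{equation*}
which is non-negative whenever each of the stated odd centred moments is non-positive.

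The only conceptual step is recognising how to eliminate $\log f(k)$: one should apply summation by parts first, then use the two representations in \eqref{eq:alphavals} \textit{asymmetrically}, one per piece of the split logarithm. Everything else is bookkeeping, and the reconciliation with $\psi$ reduces to the normalisation \eqref{eq:alphamean}. I expect no real obstacles beyond checking that the boundary cancellations at $k=0$ and $k=n$ go through, which they do because $\alpha_0 = 0$ and $\alpha_n = 1$.
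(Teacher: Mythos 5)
Your proposal is correct and follows essentially the same route as the paper: both arguments reduce \eqref{eq:todeal} to $2\sum_{k=0}^n f(k)\left(\alpha_k\log\alpha_k-(1-\alpha_k)\log(1-\alpha_k)\right)$ using \eqref{eq:alphavals} and the boundary values $\alpha_0=0$, $\alpha_n=1$, then invoke \eqref{eq:alphamean} to insert the linear term and Fubini for the power series. The only (cosmetic) difference is that you perform the summation by parts first and split $\log\bigl(f(k)/f(k+1)\bigr)=\log\alpha_{k+1}-\log(1-\alpha_k)$, whereas the paper inserts cancelling $\log(g(k)/2)$ terms into the two sums before substituting.
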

\begin{proof}
Using the fact that $g(-1) = g(n) = 0$ and adding cancelling terms into the sum \eqref{eq:todeal}, we can use 
$g(k)/(2 f(k)) = 1- \alpha_k$ and $g(k-1)/(2 f(k)) = \alpha_k$ (see
 \eqref{eq:alphavals}) to obtain that
\begin{align}
\frac{\partial H}{\partial t} & = \sum_{k=0}^{n-1} g(k)  \log f(k) - \sum_{k=1}^n  g(k-1)  \log f(k) \notag \\
& = \sum_{k=0}^{n-1} g(k)  \log \left( \frac{ f(k)}{g(k)/2} \right) - \sum_{k=1}^n g(k-1) \log \left( \frac{ f(k)}{g(k-1)/2} \right) \ \notag \\
& = 2 \sum_{k=0}^{n-1} f(k) \left( -  (1-\alpha_k) \log (1-\alpha_k) \right)  +  2  \sum_{k=1}^n f(k) \left( \alpha_{k} \log \alpha_{k} \right) \notag \\
& = 2 \sum_{k=0}^n f(k)  \left( \alpha_{k} \log \alpha_{k} -  (1-\alpha_k) \log (1-\alpha_k) \right) \label{eq:derivative}
\end{align}
since $\alpha_0 = 0$ and $\alpha_n = 1$ so that $\alpha_0 \log \alpha_0 = (1- \alpha_n) \log (1-\alpha_n) = 0$.

Since (by \eqref{eq:alphamean} above) the $\sum_{k=0}^n f(k) \alpha_k = 1/2$, subtracting off the linear term makes no difference to the sum and we can rewrite \eqref{eq:derivative} as 
$ 2 \sum_{k=0}^n f(k) \psi(\alpha_k)$, as required.

We can exchange the order of summation in 
$$ 2 \sum_{k=0}^n f(k) \psi(\alpha_k) = - 2 \sum_{k=0}^n  f(k) \sum_{r=1}^\infty \frac{(2 \alpha_k - 1)^{2r+1}}{2r(2r+1)}
= -  \sum_{r=1}^\infty \frac{2^{2r+1}}{r(2r+1)} \sum_{k=0}^n  f(k) ( \alpha_k - 1/2)^{2r+1},$$
because of Fubini's theorem, since as mentioned above the power series for $\psi$ converges absolutely. Hence
if each odd centered moment is negative then the entropy derivative \eqref{eq:todeal} is positive.
\end{proof}

Notice that, using  Remark \ref {rem:psiprops}  and Proposition \ref{prop:key}, we can immediately deduce that the entropy derivative must lie between $-2(1-\log 2)$ and $2(1-\log 2)$.

\begin{example} \label{ex:binomial}
In the case where $p_1 = p_2 = \ldots = p_n$, since $g$ is Binomial$(n-1,1/2)$, we know that $g(k)/g(k-1) = (n-k)/k$ for
$k= 1, \ldots, n$, so that 
$$ \alpha_k = \frac{1}{1 + g(k)/g(k-1)} = \frac{1}{1 + (n-k)/k} = \frac{k}{n}$$
Using this, and the fact that $\alpha_0 = 0$, since $f$ is Binomial$(n,1/2)$ we know that $f(k) = f(n-k)$ and $\alpha_k - 1/2 = -(\alpha_{n-k} - 1/2)$ so  each odd centred moment satisfies
$$ \sum_{k=0}^n f(k)  (\alpha_k - 1/2)^{2r+1} = - \sum_{k=0}^n f(n-k)  (\alpha_{n-k} - 1/2)^{2r+1} = -  \sum_{l=0}^n f(l)  (\alpha_l - 1/2)^{2r+1}$$
by relabelling, and hence equals zero.
\end{example}

 We shall argue that the binomial example, Example \ref{ex:binomial}, represents the extreme case using the following property, which will be key for us:

\begin{proposition} \label{prop:dec} If all the $p_i \leq 1/2$ then
\begin{equation} \label{eq:dec} \alpha_{k+1} - \alpha_k \mbox{ \;\;\; is decreasing in $k = 0, \ldots, n-1$.} \end{equation}
\end{proposition}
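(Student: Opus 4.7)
The first step is to translate the conclusion into an algebraic inequality between four consecutive values of $g$. From $\alpha_k=g(k-1)/(g(k-1)+g(k))$ one computes directly
\[
\alpha_{k+1}-\alpha_k \;=\; \frac{g(k)^2-g(k-1)g(k+1)}{(g(k-1)+g(k))(g(k)+g(k+1))} \;=\; \frac{\Delta_k}{h_k\,h_{k+1}},
\]
writing $\Delta_k:=g(k)^2-g(k-1)g(k+1)$ for the log-concavity defect and $h_k:=g(k-1)+g(k)=2f(k)$. Since $\Delta_k>0$ by log-concavity of $g$ and $h_k>0$ under our standing assumption that each $p_i$ is nonzero, the decreasing-differences condition becomes, after cancelling the positive common factor $h_{k+1}$, the inequality $\Delta_k\,h_{k+2} \geq \Delta_{k+1}\,h_k$.

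I propose to prove this by induction on $N=n-1$. The base case $N=1$ amounts to a single nontrivial check at $k=0$: the inequality reduces to $(1-p_1)^2 p_1 \geq p_1^2 (1-p_1)$, i.e.\ $p_1(1-p_1)(1-2p_1) \geq 0$, which is exactly the assumption $p_1 \leq 1/2$.

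For the inductive step, write $g=g'*\mathrm{Bern}(p)$ with $p=p_{n-1}\leq 1/2$ and $g'$ the Poisson--binomial of the first $N-1$ Bernoullis. Using $g(j)=(1-p)g'(j)+pg'(j-1)$, a direct expansion gives
\[
\Delta_k=(1-p)^2\Delta'_k+p(1-p)\,\Theta'_{k-1}+p^2\Delta'_{k-1},
\qquad h_k=(1-p)h'_k+p\,h'_{k-1},
\]
where the auxiliary quantity $\Theta'_j:=g'(j)g'(j+1)-g'(j-1)g'(j+2)$ is non-negative, since log-concavity of $g'$ means the ratios $g'(j+1)/g'(j)$ are decreasing. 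Substituting, $\Delta_k h_{k+2}-\Delta_{k+1}h_k$ becomes a cubic polynomial $F(p)=(1-p)^3 A + (1-p)^2 p\,M_1 + (1-p)p^2\,M_2 + p^3 B$, whose extreme coefficients $A$ and $B$ are the inductive hypothesis applied to $g'$ at the indices $k$ and $k-1$ respectively, and are therefore non-negative.

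The main obstacle is showing $F(p)\geq 0$ on $[0,1/2]$ even though the middle coefficients $M_1$, $M_2$, which mix $\Delta'$-terms with $\Theta'$-terms, can take either sign. The hypothesis $p\leq 1/2$ is essential here, as simple one-Bernoulli examples show that $F$ may become negative for $p$ close to $1$. The intended argument pairs the middle terms as $(1-p)p\,[(1-p)M_1+pM_2]$ and uses $(1-p)\geq p$ on $[0,1/2]$ to absorb any negative contribution into positive multiples of the inductive hypothesis combined with the non-negative quantities $\Theta'_j$; making this absorption quantitative is the delicate bookkeeping that drives the proof.
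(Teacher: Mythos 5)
Your reduction of the claim to the four-term inequality $\Delta_k h_{k+2} \geq \Delta_{k+1} h_k$ is exactly the paper's starting point (equation \eqref{eq:tocontrol}, with $D_g(k)=\Delta_k$ and $h_k=2f(k)$), and your base case and the convolution expansions of $\Delta_k$ and $h_k$ are correct. But the argument stops precisely where the difficulty begins. Writing $F(p)=(1-p)^3A+(1-p)^2p\,M_1+(1-p)p^2\,M_2+p^3B$, you establish $A,B\geq 0$ from the inductive hypothesis and then assert that the possibly negative middle coefficients can be ``absorbed'' using $(1-p)\geq p$, describing this as ``delicate bookkeeping''. That absorption is the entire content of the proposition and is not carried out: no inequality is given that bounds $(1-p)M_1+pM_2$ from below, and even at the endpoint $p=1/2$ of the allowed range, where $F(1/2)=\tfrac{1}{8}\bigl(A+M_1+M_2+B\bigr)$, non-negativity does not follow from $A,B\geq 0$ together with log-concavity of $g'$ and $\Theta'_j\geq 0$. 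As it stands this is a plan, not a proof.

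The paper's own argument indicates why the missing step is unlikely to be mere bookkeeping. There, \eqref{eq:tocontrol} is obtained by comparing the two identities \eqref{eq:form1} and \eqref{eq:form2}, derived from \cite[Lemma A1]{johnson34}, which express the relevant cubic combinations of $g$-values as sums over $i$ of $p_i(1-p_i)$ times $(1-p_i)$ (respectively $p_i$) times $\bigl(\left(D^{(i)}(k)\right)^2-D^{(i)}(k-1)D^{(i)}(k+1)\bigr)/g^{(i)}(k)$; the condition $p_i\leq 1/2$ then lets one compare the two sums term by term \emph{provided} this last quantity is non-negative. That non-negativity says the sequence $k\mapsto g^{(i)}(k)^2-g^{(i)}(k-1)g^{(i)}(k+1)$ is itself log-concave --- an instance of iterated log-concavity of coefficient sequences of real-rooted polynomials, which is a genuinely deep input (\cite{branden}, or \cite[Section 2.2]{johnson36}) and is not a consequence of ordinary log-concavity of $g'$ or of your inductive hypothesis. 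Your middle coefficients $M_1$ and $M_2$ encode comparisons of exactly this second-order type (for instance $\Theta'_{k-1}h'_{k+2}$ against $\Theta'_k h'_k$), so until you either prove the absorption inequality directly or import an input of comparable strength, the induction does not close.
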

\begin{proof} See Appendix \ref{sec:decpf}. \end{proof}

Note that comparing averages, and taking the values of $\alpha_0=0$ and $\alpha_n=1$ from \eqref{eq:alphamon}, Proposition \ref{prop:dec}  implies that
$$ \frac{\alpha_k}{k} = \frac{ \sum_{\ell = 0}^{k-1} ( \alpha_{\ell+1} - \alpha_\ell)}{k}
\geq \frac{ \sum_{\ell = 0}^{n-1} ( \alpha_{\ell+1} - \alpha_\ell)}{n} =\frac{\alpha_n}{n} = \frac{1}{n},$$
or that if all the $p_i \leq 1/2$ then $\alpha_k \geq k/n$ for $0 \leq k \leq n$, showing that the binomial distribution of  Example \ref{ex:binomial} is the extreme case in this sense.

\section{Proof of Shepp--Olkin monotonicity conjecture}

We are now in a position to complete our proof of Conjecture \ref{conj:somon}. 

\begin{definition} First we introduce some further notation:
\begin{enumerate}
\item We denote $\beta_k = \alpha_k -1/2$. In particular $\alpha_{k+1}+\alpha_{k-p}-1 = \beta_{k+1}+\beta_{k-p}$. As proven in \eqref{eq:alphamon} and Proposition \ref{prop:dec} respectively, the sequence $(\beta_k)_k$ is non-decreasing and $(\beta_{k+1}-\beta_k)_k$ is non-increasing.
\item We define the family $(A_p(k))_k$ by $A_{0}(k) = 1$ and, for $p \geq 1$, $A_p(k) = \prod_{j=0}^{p-1} \alpha_{k-j}$. Here if $p \geq k+1$, we take $A_p(k) = 0$ (this reflects
that the product includes the term $\alpha_0 = 0$).
\item We define the family $(B_p(k))_k$ by $B_{0}(k) = 1$ and, for $1 \leq p \leq k+1$, write $B_p(k) = \prod_{j=0}^{p-1}(\beta_{k+1}-\beta_{k-j})$. For other values of $k$, $B_p(k)$ is not defined.
\item The notation $\nabla$ stands for the left-derivative operator: $\nabla v(k) = v(k)-v(k-1)$. This operator satisfies a product rule of the form:
\begin{equation} \label{eq:NablaProduct}
\nabla(vw)(k) = v(k) \nabla w(k) + w(k-1) \nabla v(k).
\end{equation}
\item For $n \geq 1$ and $p \geq 1$ we define the polynomial (symmetric in its inputs)
\begin{equation}
Q_{n,p}(X_1,\ldots,X_p) = \sum X_1^{i_1} \ldots X_p^{i_p},
\end{equation}
where the sum is taken over all the $p$-tuples $0 \leq i_1,\ldots,i_p \leq n$ such that $i_1+\ldots+i_p = n$. We also set $Q_{0,p}=1$ and $Q_{-1,p}=0$ for all $p \geq 1$. Clearly, $Q_{n,p}(X_1,\ldots,X_p)$ is non-negative if $X_1,\ldots,X_p$ are non-negative.
\end{enumerate}
\end{definition}

We now state three technical lemmas that we will require in the proof; each of these are proved in Appendix \ref{sec:technical}. First, the fact that $B_p(k)$ is decreasing in $k$:
\begin{lemma} \label{lem:nablaBneg}
Given $p \geq 0$, we have $\nabla B_p(k) = B_p(k) - B_p(k-1) \leq 0$ for $k \geq p$.
\end{lemma}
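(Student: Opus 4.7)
My plan is to unfold the product defining $B_p$ and compare $B_p(k)$ and $B_p(k-1)$ factor by factor, using only the two monotonicity properties of $(\beta_k)$ recorded at the start of the section. Writing out
$$ B_p(k) = \prod_{j=0}^{p-1} (\beta_{k+1} - \beta_{k-j}), \qquad B_p(k-1) = \prod_{j=0}^{p-1} (\beta_k - \beta_{k-1-j}), $$
the $p=0$ case is trivial (both equal $1$), so assume $p \geq 1$ and $k \geq p$, which guarantees both sides are defined.

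The first observation I would make is that every factor is non-negative: since $(\beta_k)$ is non-decreasing (from \eqref{eq:alphamon}), each $\beta_{k+1} - \beta_{k-j} \geq 0$ and similarly each $\beta_k - \beta_{k-1-j} \geq 0$. Hence the sign of $\nabla B_p(k)$ will follow from a termwise comparison of the factors.

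Next I would establish the key inequality, which is that for each $j \in \{0, 1, \ldots, p-1\}$,
$$ \beta_{k+1} - \beta_{k-j} \ \leq \ \beta_k - \beta_{k-1-j}. $$
To see this, I would telescope both sides as sums of $j+1$ consecutive first differences of $(\beta_\ell)$, namely
$$ \beta_{k+1} - \beta_{k-j} = \sum_{\ell=k-j}^{k} (\beta_{\ell+1} - \beta_{\ell}), \qquad \beta_k - \beta_{k-1-j} = \sum_{\ell=k-1-j}^{k-1} (\beta_{\ell+1} - \beta_{\ell}), $$
and then invoke Proposition \ref{prop:dec}, which says that the sequence $(\beta_{\ell+1} - \beta_\ell)$ is non-increasing in $\ell$. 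Each term in the right-hand sum is therefore at least as large as the correspondingly-positioned term in the left-hand sum (each index is shifted down by $1$), which yields the desired termwise inequality.

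Finally, since all factors are non-negative and each factor of $B_p(k)$ is dominated by the corresponding factor of $B_p(k-1)$, multiplying the inequalities gives $B_p(k) \leq B_p(k-1)$, i.e. $\nabla B_p(k) \leq 0$. The argument is essentially a discrete version of the statement that the product of non-negative concave increments is decreasing; I do not anticipate any real obstacle, the only point requiring mild care being that one must check $k \geq p$ so that both $B_p(k)$ and $B_p(k-1)$ are defined (the hypothesis $k \geq p$ gives $k - 1 \geq p - 1$, so $B_p(k-1)$ is defined by the range $1 \leq p \leq (k-1)+1 = k$).
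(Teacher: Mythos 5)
Your proof is correct and follows essentially the same route as the paper's: compare the two products factor by factor, show each factor $\beta_{k+1}-\beta_{k-j}$ of $B_p(k)$ is dominated by the corresponding factor $\beta_k-\beta_{k-1-j}$ of $B_p(k-1)$ using Proposition \ref{prop:dec}, and conclude since all factors are non-negative. The only cosmetic difference is that you telescope each factor into $j+1$ first differences and compare termwise, whereas the paper cancels the common middle portion and reduces to the single inequality $\beta_{k+1}-\beta_k \leq \beta_{k-j}-\beta_{k-j-1}$.
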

We next  give an integration by parts formula. Note that although we restrict the range of summation for technical reasons, the values $A_p(k)$ and
$A_{p+1}(k)$ are zero outside the respective ranges:
\begin{lemma} \label{lem:intparts}
For any function $v(k)$ that is well-defined on $p \leq k \leq n-1$  and any $p \geq 0$ we have
\begin{equation} \label{eq:IPP}
\sum_{k=p}^{n-1} g(k) A_p(k) v(k) (\beta_{k+1}+\beta_{k-p})  = \sum_{k=p+1}^{n-1} g(k) A_{p+1}(k) \nabla v(k).
\end{equation}
\end{lemma}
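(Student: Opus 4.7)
The plan is to prove the identity by starting from the right-hand side, expanding $\nabla v(k) = v(k) - v(k-1)$ into two sums, and then manipulating them so that the common factor $g(k) A_p(k) v(k)$ appears. This approach leverages the recursive structure of $A_{p+1}$ together with the identity \eqref{eq:alphavals2}, $\alpha_{k+1} g(k+1) = (1-\alpha_{k+1}) g(k)$, which is exactly the discrete reversibility relation that makes a discrete integration by parts work.

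More concretely, I would first observe the two ways of peeling a factor off $A_{p+1}$:
\begin{equation*}
A_{p+1}(k) = \alpha_{k-p}\, A_p(k), \qquad A_{p+1}(k+1) = \alpha_{k+1}\, A_p(k),
\end{equation*}
obtained by removing the rightmost (resp.\ leftmost) factor of the product. Writing
\begin{equation*}
\text{RHS} = \sum_{k=p+1}^{n-1} g(k) A_{p+1}(k)\, v(k) \;-\; \sum_{k=p+1}^{n-1} g(k) A_{p+1}(k)\, v(k-1),
\end{equation*}
I would reindex the second sum via $k \mapsto k+1$ to get a sum over $k=p,\ldots,n-2$ of $g(k+1) A_{p+1}(k+1) v(k)$, then apply the second peeling identity together with \eqref{eq:alphavals2} to convert it to $\sum_{k=p}^{n-2} (1-\alpha_{k+1}) g(k) A_p(k)\, v(k)$. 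In the first sum, the first peeling identity converts it to $\sum_{k=p+1}^{n-1} \alpha_{k-p}\, g(k) A_p(k)\, v(k)$.

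Next, I would use the boundary values $\alpha_0 = 0$ and $\alpha_n = 1$ to extend both sums to the common range $k = p, \ldots, n-1$: the $k=p$ term of the first sum vanishes because $\alpha_{k-p}=\alpha_0=0$, and the $k=n-1$ term of the second sum vanishes because $1-\alpha_{k+1}=1-\alpha_n=0$. Subtracting, the combined summand becomes
\begin{equation*}
g(k) A_p(k)\, v(k)\,\bigl(\alpha_{k-p} - (1-\alpha_{k+1})\bigr) = g(k) A_p(k)\, v(k)\,(\alpha_{k+1} + \alpha_{k-p} - 1),
\end{equation*}
which equals $g(k) A_p(k)\, v(k)\,(\beta_{k+1} + \beta_{k-p})$ by the first item of the definition, recovering the LHS.

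The only obstacle is bookkeeping: making sure the index shifts and the vanishing-at-the-boundary cancellations line up so that the extended sums have matching ranges. All the algebra is a direct application of \eqref{eq:alphavals2} together with the two factorisations of $A_{p+1}$, so no deeper idea is needed once the index shift in the $v(k-1)$ part is carried out.
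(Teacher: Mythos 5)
Your proposal is correct and follows essentially the same route as the paper's own proof: both start from the right-hand side, split $\nabla v(k)$, reindex the $v(k-1)$ sum, apply the two factorisations $A_{p+1}(k)=\alpha_{k-p}A_p(k)$ and $A_{p+1}(k+1)=\alpha_{k+1}A_p(k)$ together with \eqref{eq:alphavals2}, and use the boundary facts $\alpha_0=0$ and $g(n)=0$ (equivalently $\alpha_n=1$) to align the summation ranges. The only cosmetic difference is the order in which the reindexing and the boundary extension are performed.
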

Finally, a result concerning differences of the $Q$ polynomials:
\begin{lemma} \label{lem:QnpIdentity}
For $n \geq 0$ and $p \geq 1$, we have
\begin{equation} \label{eq:QnpIdentity}
Q_{n,p}(X_2,\ldots,X_{p+1}) - Q_{n,p}(X_1,\ldots,X_p) = (X_{p+1}-X_1) Q_{n-1,p+1}(X_1,\ldots,X_{p+1}).
\end{equation}
\end{lemma}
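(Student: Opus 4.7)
The plan is to treat $Q_{n,p}$ as the complete homogeneous symmetric polynomial of degree $n$ in $p$ variables, and to derive the identity by applying the classical one-variable recursion twice, exploiting the manifest symmetry of $Q_{n,p+1}$ (and of $Q_{n-1,p+1}$).

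First I would establish the auxiliary recursion
\[ Q_{n,q}(Y_1,\ldots,Y_q) = Q_{n,q-1}(Y_1,\ldots,Y_{q-1}) + Y_q \, Q_{n-1,q}(Y_1,\ldots,Y_q) \]
by splitting the defining sum $\sum Y_1^{i_1}\cdots Y_q^{i_q}$ (over $i_1+\cdots+i_q = n$) according to whether $i_q=0$ or $i_q\geq 1$. Terms with $i_q=0$ reproduce $Q_{n,q-1}(Y_1,\ldots,Y_{q-1})$; in terms with $i_q\geq 1$, writing $i_q = j+1$ and factoring out $Y_q$ reproduces $Y_q\, Q_{n-1,q}(Y_1,\ldots,Y_q)$. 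The boundary conventions $Q_{0,p}=1$ and $Q_{-1,p}=0$ ensure the recursion remains valid for $n\in\{0,1\}$.

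Next I apply this recursion with $q=p+1$ in two ways. The direct application (removing $X_{p+1}$) gives
\[ Q_{n,p+1}(X_1,\ldots,X_{p+1}) = Q_{n,p}(X_1,\ldots,X_p) + X_{p+1}\, Q_{n-1,p+1}(X_1,\ldots,X_{p+1}). \]
Because $Q_{n,p+1}$ and $Q_{n-1,p+1}$ are symmetric in their arguments, the same recursion applied after permuting $X_1$ into the last slot (and relabelling back) yields
\[ Q_{n,p+1}(X_1,\ldots,X_{p+1}) = Q_{n,p}(X_2,\ldots,X_{p+1}) + X_1\, Q_{n-1,p+1}(X_1,\ldots,X_{p+1}). \]
Subtracting these two equalities makes the left-hand sides cancel, and the common factor $Q_{n-1,p+1}(X_1,\ldots,X_{p+1})$ emerges, giving exactly \eqref{eq:QnpIdentity}.

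I do not anticipate any serious obstacle. The argument reduces to a short combinatorial identity together with a symmetry observation; the only care required is in tracking the base-case conventions $Q_{0,p}=1$ and $Q_{-1,p}=0$ so that the recursion behaves correctly at $n=0$ and $n=1$.
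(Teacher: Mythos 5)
Your proof is correct, but it takes a different route from the paper's. The paper proves Lemma \ref{lem:QnpIdentity} via the generating function \eqref{eq:QnpGenerating}: it verifies the rational-function identity
\begin{equation*}
\frac{1}{(1-tX_2)\cdots(1-tX_{p+1})} - \frac{1}{(1-tX_1)\cdots(1-tX_p)} = \frac{t(X_{p+1}-X_1)}{(1-tX_1)\cdots(1-tX_{p+1})}
\end{equation*}
and then compares coefficients of $t^n$, the shift by $t$ on the right accounting for the drop from $Q_{n,p}$ to $Q_{n-1,p+1}$. You instead work at the level of the defining sum, establishing the standard one-variable recursion for complete homogeneous symmetric polynomials (splitting on whether the last exponent is zero) and applying it twice using symmetry; the subtraction then cancels $Q_{n,p+1}$ and isolates the factor $(X_{p+1}-X_1)$. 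Both arguments are short and elementary. The generating-function route handles all $n$ simultaneously in one line of algebra and fits naturally with how the paper introduces \eqref{eq:QnpGenerating} as an equivalent definition; your coefficient-level recursion avoids formal power series entirely and makes the boundary conventions $Q_{0,p}=1$, $Q_{-1,p}=0$ transparent, since they are exactly what the recursion demands at $n=0$. Your handling of those base cases is correct, and I see no gap.
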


We now state and prove the key Proposition:

\begin{proposition} \label{prop:SrpIncreasing}
Fix $r \geq 1$. The sequence $(S_{r,p})_{1 \leq p \leq r+1}$ defined by
\begin{equation*}
S_{r,p} = \sum_{k=p-1}^{n-1} g(k) A_{p-1}(k) B_p(k) Q_{r-p,p+1}(\beta_{k+1}^2,\ldots,\beta_{k-p+1}^2)\left(\beta_{k+1}+\beta_{k-p+1} \right),
\end{equation*}
is increasing in $p$. Here note that $S_{r,r+1} = 0$ since $Q_{-1,p+1} = 0$.
\end{proposition}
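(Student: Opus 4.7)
My plan is to show $S_{r,p+1}-S_{r,p} \geq 0$ by applying the integration-by-parts identity \eqref{eq:IPP} to $S_{r,p}$ and isolating $S_{r,p+1}$ as the leading contribution. Set $Q(k) := Q_{r-p,p+1}(\beta_{k+1}^2,\ldots,\beta_{k-p+1}^2)$ and $v(k) := B_p(k) Q(k)$, so the summand of $S_{r,p}$ reads $g(k) A_{p-1}(k) v(k) (\beta_{k+1}+\beta_{k-p+1})$. This matches the left-hand side of \eqref{eq:IPP} with the lemma's parameter $p$ replaced by $p-1$, so Lemma \ref{lem:intparts} gives
$$S_{r,p} = \sum_{k=p}^{n-1} g(k) A_p(k) \nabla v(k).$$

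Next I would expand $\nabla v(k)$ via the product rule \eqref{eq:NablaProduct} as $B_p(k)\nabla Q(k) + Q(k-1)\nabla B_p(k)$. Using the symmetry of $Q_{n,p}$ in its inputs, I can write the arguments of $Q(k)$ and $Q(k-1)$ as $(Y_0,Y_1,\ldots,Y_p)$ and $(Y_1,\ldots,Y_{p+1})$ with $Y_i = \beta_{k+1-i}^2$. Lemma \ref{lem:QnpIdentity} (applied with its internal parameter set to $p+1$) then yields
$$\nabla Q(k) = \bigl(\beta_{k+1}^2-\beta_{k-p}^2\bigr) Q_{r-p-1,p+2}(\beta_{k+1}^2,\ldots,\beta_{k-p}^2).$$
Factoring $\beta_{k+1}^2-\beta_{k-p}^2 = (\beta_{k+1}-\beta_{k-p})(\beta_{k+1}+\beta_{k-p})$ and absorbing $(\beta_{k+1}-\beta_{k-p})$ into $B_p(k)$ to produce $B_{p+1}(k)$, the contribution from $B_p(k)\nabla Q(k)$ matches the definition of $S_{r,p+1}$ term for term.

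The leftover contribution is
$$R := \sum_{k=p}^{n-1} g(k) A_p(k) Q(k-1) \nabla B_p(k),$$
so that $S_{r,p} = S_{r,p+1} + R$. To finish I would note that $g(k) \geq 0$, $A_p(k) \geq 0$ as a product of $\alpha_j\in[0,1]$, and $Q(k-1) = Q_{r-p,p+1}(\beta_k^2,\ldots,\beta_{k-p}^2) \geq 0$ since all its monomials are in non-negative squared variables; meanwhile $\nabla B_p(k) \leq 0$ for $k \geq p$ by Lemma \ref{lem:nablaBneg}. Hence every summand of $R$ is non-positive, so $R \leq 0$ and $S_{r,p+1} \geq S_{r,p}$, as required.

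The main obstacle is the index bookkeeping: aligning the factor $(\beta_{k+1}+\beta_{k-p+1})$ in $S_{r,p}$ against the $(\beta_{k+1}+\beta_{k-p})$ that appears in \eqref{eq:IPP} (via the shift $p \mapsto p-1$), and then correctly identifying the arguments of $Q(k)$ and $Q(k-1)$ before invoking Lemma \ref{lem:QnpIdentity}. Once those identifications are made, the non-positivity of $\nabla B_p$ from Lemma \ref{lem:nablaBneg} controls the sign of the remainder and delivers the monotonicity at once.
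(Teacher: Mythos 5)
Your proposal is correct and follows essentially the same route as the paper's proof: integration by parts (Lemma \ref{lem:intparts} with parameter $p-1$), the product rule \eqref{eq:NablaProduct}, the difference identity of Lemma \ref{lem:QnpIdentity} to produce $S_{r,p+1}$, and Lemma \ref{lem:nablaBneg} to show the remainder is non-positive. The index bookkeeping you flag is handled exactly as you describe, so no changes are needed.
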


(Note that by restricting to this range of summation the $B_p(k)$ is well-defined).

\begin{proof} We first take $v(k) = B_p(k) Q_{r-p,p+1}(\beta_{k+1}^2,\ldots,\beta_{k-p+1}^2)$ in the integration by parts formula \eqref{eq:IPP} from Lemma \ref{lem:intparts} to write
\begin{equation*}
S_{r,p} = \sum_{k=p}^{n-1} g(k) A_p(k) \nabla \left[B_p(k) Q_{r-p,p+1}(\beta_{k+1}^2,\ldots,\beta_{k-p+1}^2)\right],
\end{equation*}
where here and throughout the proof, the $\nabla$ refers to a difference in the $k$ parameter.
Now, using the product rule \eqref{eq:NablaProduct} with $v(k) = B_p(k)$ and $w(k) = Q_{r-p,p+1}(\beta_{k+1}^2,\ldots,\beta_{k-p+1}^2)$ yields:
\begin{eqnarray*}
S_{r,p} &=&  \sum_{k=p}^{n-1} g(k) A_p(k) B_{p}(k) \nabla \left[Q_{r-p,p+1}(\beta_{k+1}^2,\ldots,\beta_{k-p+1}^2)\right] \\
&& + \sum_{k=p}^{n-1} g(k) A_p(k)  Q_{r-p,p+1}(\beta_{k}^2,\ldots,\beta_{k-p}^2) \nabla B_p(k) 
\end{eqnarray*}
 In order to transform the first sum, we use equation~\eqref{eq:QnpIdentity} from Lemma \ref{lem:QnpIdentity}:
\begin{eqnarray*}
\lefteqn{ \nabla \left[Q_{r-p,p+1}(\beta_{k+1}^2,\ldots,\beta_{k-p+1}^2)\right] } \\
& = & Q_{r-p,p+1}(\beta_{k+1}^2,\ldots,\beta_{k-p+1}^2) - Q_{r-p,p+1}(\beta_{k}^2,\ldots,\beta_{k-p}^2) \\
 &=& (\beta_{k+1}^2-\beta_{k-p}^2) Q_{r-p-1,p+2}(\beta_k^2,\ldots,\beta_{k-p+1}^2) \\
&=&(\beta_{k+1}-\beta_{k-p}) Q_{r-p-1,p+2}(\beta_k^2,\ldots,\beta_{k-p+1}^2)(\beta_{k+1}+\beta_{k-p}).
\end{eqnarray*}
Further Lemma \ref{lem:nablaBneg} gives that $ \nabla B_p(k)  \leq 0$ for $k \geq p$ and $Q_{r-p,p+1}(\beta_{k}^2,\ldots,\beta_{k-p}^2) \geq 0$, so the second sum is $\leq 0$.
We finally conclude that
\begin{equation*}
S_{r,p} \leq \sum_{k=p}^{n-1} g(k) A_p(k) B_{p+1}(k) Q_{r-p-1,p+2}(\beta_{k+1}^2,\ldots,\beta_{k-p+1}^2)\left(\beta_{k+1}+\beta_{k-p} \right) = S_{r,p+1}, 
\end{equation*}
as we have $B_p(k)(\beta_{k+1}-\beta_{k-p}) = B_{p+1}(k)$.
\end{proof}

We are now able to prove the following theorem, which confirms Conjecture \ref{conj:somon}:
\begin{theorem} \label{thm:somon} If
 all $p_i \leq 1/2$ then $H(\vc{p})$ is a non-decreasing function of $\vc{p}$.
Equality holds if and only if each $p_i$ equals $0$ or $\frac{1}{2}$. \end{theorem}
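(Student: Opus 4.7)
My plan is to start by using the symmetry of $H(\vc{p})$ in its arguments together with the concavity formula \eqref{eq:concave} to reduce the monotonicity statement to showing $\partial H/\partial p_n \geq 0$ at $p_n = 1/2$, with all other $p_i \leq 1/2$. Once there, Proposition \ref{prop:key} reduces the problem further: it suffices to show that every odd centred moment $\sum_{k=0}^n f(k) \beta_k^{2r+1}$ (for $r \geq 1$) is non-positive.

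The key step will be to identify $S_{r,1}$ with $4 \sum_{k=0}^n f(k) \beta_k^{2r+1}$; once this is established, Proposition \ref{prop:SrpIncreasing} together with $S_{r,r+1} = 0$ immediately delivers the non-positivity of every odd moment. To carry out the identification, I would unpack $S_{r,1}$ using $A_0 \equiv 1$ and $B_1(k) = \beta_{k+1} - \beta_k$, combine the factors $(\beta_{k+1}+\beta_k)(\beta_{k+1}-\beta_k) = \beta_{k+1}^2 - \beta_k^2$, and invoke the symmetric-polynomial identity
\[ (X-Y)Q_{r-1,2}(X,Y) = X^r - Y^r \]
(which holds because $Q_{r-1,2}$ is the complete homogeneous symmetric polynomial of degree $r-1$ in two variables), with $X = \beta_{k+1}^2$, $Y = \beta_k^2$, to telescope $S_{r,1}$ into $\sum_{k=0}^{n-1} g(k)(\beta_{k+1}^{2r} - \beta_k^{2r})$. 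Splitting this sum and substituting the two alternative forms $g(k) = 2\alpha_{k+1} f(k+1) = 2(1-\alpha_k)f(k)$ from \eqref{eq:alphavals} on the respective pieces, then reindexing (with boundary terms vanishing thanks to $\alpha_0 = 0, \alpha_n = 1$), produces the desired $4\sum_k f(k)\beta_k^{2r+1}$. Combined with the preceding reductions this closes the monotonicity half.

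For the equality statement, $H(\vc{p}) = H(\vc{q})$ with $p_i \leq q_i \leq 1/2$ forces $\partial H/\partial p_i = 0$ along the coordinate-wise segment from $\vc{p}$ to $\vc{q}$ on every moving coordinate, which in turn forces each inequality $S_{r,p} \leq S_{r,p+1}$ in the proof of Proposition \ref{prop:SrpIncreasing} to be saturated. Taking $r = p = 1$, and using that $g(k) A_1(k) = g(k)\alpha_k > 0$ on $1 \leq k \leq n-1$ whenever all $p_i \in (0, 1/2]$, the proof of Proposition \ref{prop:SrpIncreasing} forces $\nabla B_1(k) = 0$, i.e. $\alpha_{k+1} - \alpha_k$ to be constant in $k$; combined with $\alpha_0 = 0$, $\alpha_n = 1$ this pins down $\alpha_k = k/n$, and inverting Definition \ref{def:alphadef} identifies $g$ as Binomial$(n-1, 1/2)$, giving $p_1 = \cdots = p_{n-1} = 1/2$. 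Boundary cases where some $p_i = 0$ enter via a routine limiting argument that drops the trivial Bernoulli factors from the sum. I expect this equality analysis to be the main obstacle, since the strict versions of \eqref{eq:alphamon} and Proposition \ref{prop:dec} need to be applied with some care to cover all degenerate configurations cleanly, in particular to propagate the equality $\nabla B_1(k) = 0$ over the full range $1 \leq k \leq n-1$ without losing information at indices where $f(k)$ degenerates.
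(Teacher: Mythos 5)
Your argument follows the paper's proof essentially step for step: the reduction via symmetry and the strict concavity \eqref{eq:concave} to the case $p_n=1/2$, the reduction to non-positivity of the odd centred moments via Proposition \ref{prop:key}, the identification $4\sum_k f(k)\beta_k^{2r+1} = S_{r,1}$ (you telescope in the reverse direction, but it is the same algebra), the appeal to Proposition \ref{prop:SrpIncreasing}, and the equality analysis via saturation of the $r=1$ inequality forcing $\alpha_k = k/n$. The only (cosmetic) divergence is the final step of the equality case, where the paper sidesteps identifying $g$ as Binomial$(n-1,1/2)$ and factoring the generating function by instead feeding $\alpha_k=k/n$ into the mean identity \eqref{eq:alphamean} to obtain $\frac{1}{n}\sum_i p_i = \frac{1}{2}$, which with $p_i \leq \frac{1}{2}$ immediately forces $p_i \equiv \frac{1}{2}$.
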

\begin{proof}
As described in Proposition \ref{prop:key}, it is sufficient for us to prove that for every $r \geq 1$ we have
\begin{equation} \label{eq:oddpower}
\sum_{k=0}^n f(k)  (\alpha_k - 1/2)^{2r+1} = \sum_{k=0}^n f(k) \beta_k^{2r+1} \leq 0.
\end{equation}
Using \eqref{eq:alphavals} we know that $\alpha_k = g(k-1)/(2 f(k))$ and $1-\alpha_k = g(k)/(2 f(k))$, so that (subtracting these two expressions) 
$$ \beta_k = \alpha_k - 1/2 = \frac{ (g(k-1) - g(k)}{4 f(k)}.$$
This means that, using a standard factorization of $\left(\beta_{k+1}^{2r}-\beta_k^{2r} \right) $, since $g(-1) = g(n) = 0$
we can write
\begin{eqnarray}
4 \sum_{k=0}^n f(k) \beta_k^{2r+1}  &=& \sum_{k=0}^n (g(k-1)-g(k)) \beta_k^{2r} \nonumber \\
&=& \sum_{k=0}^{n-1} g(k) \left(\beta_{k+1}^{2r}-\beta_k^{2r} \right) \nonumber \\
&=& \sum_{k=0}^{n-1} g(k) \left(\beta_{k+1}-\beta_k \right)\left(\beta_{k+1}+\beta_k\right)Q_{r-1,2}(\beta_{k+1}^2,\beta_k^2) \label{eq:finalexp} \\
&=& S_{r,1}.  \nonumber 
\end{eqnarray}
However, Proposition~\ref{prop:SrpIncreasing} gives 
\begin{equation*}
S_{r,1} \leq S_{r,r+1}=0,
\end{equation*}
and we are done. Note that an examination of \eqref{eq:finalexp} allows us to deduce conditions under which equality
holds for the cubic case ($r=1$).  
In this case we can rewrite \eqref{eq:finalexp} using the integration by parts formula \eqref{eq:IPP} as
\begin{eqnarray} 
 \sum_{k=0}^{n-1} g(k) \left( \beta_{k+1} - \beta_k \right) \left( \beta_{k+1} + \beta_k \right) 
& = & \sum_{k=0}^{n-1} g(k) \alpha_{k} \left( ( \beta_{k+1} - \beta_k) - (\beta_{k} - \beta_{k-1}) \right) \label{eq:cubic}
\end{eqnarray}
Here  $g(k)$ and $\alpha_k$ are positive for $k \geq 1$, and Proposition \ref{prop:dec} tells us that the second bracket is negative,
and so the centered third moment equals zero if and only $\beta_{k+1} - \beta_k$ is constant in $k$, which means that $\alpha_k = k/n$.
However, \eqref{eq:alphamean} tells us that this implies that $$\frac{1}{2} = \sum_{k=0}^n f(k) \alpha_k
= \frac{1}{n} \sum_{k=0}^n f(k) k = \frac{1}{n} \sum_{i=1}^n p_i,$$
so that equality can hold if and only if $p_i \equiv 1/2$.
\end{proof}

\section{Monotonicity of R\'{e}nyi and Tsallis entropies}

As in \cite[Section 4]{johnson36} where a similar discussion considered the question of concavity of entropies, we briefly discuss whether Theorem \ref{thm:somon} may extend to prove that $q$-R\'{e}nyi
and $q$-Tsallis entropies are always increasing functions of $\vc{p}$ for $p_i \leq 1/2$.
We make the following definitions, each of which reduce to the Shannon entropy
\eqref{eq:entdef} as $q \rightarrow 1$.

\begin{definition} For $f_{\vc{p}}$ as defined above, for $0 \leq q \leq \infty$  define
\begin{eqnarray}
\mbox{1. $q$-R\'{e}nyi entropy \cite{renyi2}: \;\;}
\label{eq:renyi} H_{R,q}(\vc{p}) &  = & \frac{1}{1-q} \log \left( \sum_{x=0}^n f_{\vc{p}}(x)^q \right), \\
\mbox{2. $q$-Tsallis entropy  \cite{tsallis}: \;\;}
 \label{eq:tsallis} H_{T,q}(\vc{p}) & = & \frac{1}{q-1}  \left( 1-  \sum_{x=0}^n f_{\vc{p}}(x)^q \right).\end{eqnarray}
\end{definition}
Note that, unlike the concavity case of \cite[Section 4]{johnson36}, since they are both monotone functions of 
$\sum_{x=0}^n f_{\vc{p}}(x)^q$, both $H_{R,q}(\vc{p})$ and $H_{T,q}(\vc{p})$ will be increasing in $\vc{p}$ in the same cases. 
We can provide analogues of \eqref{eq:todeal} and \eqref{eq:concave} by  (for $q \neq 1$)
\begin{align}
\frac{\partial H_{T,q}}{\partial t} & = - \frac{q}{q-1} \sum_{k=0}^n \left( g(k-1) - g(k) \right) f(k)^{q-1}, \label{eq:Ttodeal} \\
\frac{\partial^2 H_{T,q}}{\partial t^2} & =  - q  \sum_{k=0}^n  \left( g(k-1) - g(k) \right)^2 f(k)^{q-2}. \label{eq:Tconcave}
\end{align} 
Again, the second term is negative, and  therefore $H_{T,q}(\vc{p})$ will be increasing for all $p_n  \leq 1/2$ if it is increasing in the case $p_n = 1/2$. Clearly for $q=0$ \eqref{eq:Ttodeal} shows that the entropy is constant (indeed we know that in this case $H_{R,q} = \log (n+1)$ and $H_{T,q} = n$).

Curiously, we can simplify \eqref{eq:Ttodeal} in the case of collision entropy ($q=2$) by substituting for $f$ as a linear combination
of $g$ (which is the argument that did not work for $q=1$).
\begin{lemma} For $q=2$, if $p_n \leq 1/2$
\begin{equation} \label{eq:T2eq} \frac{\partial H_{T,q}}{\partial t} = (1-2p_n) \sum_{k=0}^n \left( g(k-1) - g(k) \right)^2 \geq 0.
\end{equation}
\end{lemma}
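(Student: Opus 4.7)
The plan is to start from the general formula \eqref{eq:Ttodeal} specialized to $q=2$, namely
\[ \frac{\partial H_{T,2}}{\partial t} = -2 \sum_{k=0}^n \bigl( g(k-1) - g(k) \bigr) f(k), \]
and then substitute the convolution identity $f(k) = (1-p_n) g(k) + p_n g(k-1)$ (which comes from the fact that $f$ is $g$ convolved with Bernoulli$(p_n)$, and which, as mentioned in the excerpt, is the substitution that \emph{does} work well for $q=2$).

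After substitution I would expand the sum, separating the coefficient of $(1-p_n)$ from the coefficient of $p_n$. The key algebraic observation is that, because $g$ is supported on $\{0,\ldots,n-1\}$ and we take $g(-1)=g(n)=0$, an index shift gives $\sum_k g(k-1)^2 = \sum_k g(k)^2$. This makes the $(1-p_n)$-coefficient and the $p_n$-coefficient differ only by sign, collapsing the expression to
\[ 2(1-2p_n)\Bigl[\sum_k g(k)^2 - \sum_k g(k) g(k-1)\Bigr]. \]
Finally, the bracketed quantity equals $\tfrac12 \sum_k (g(k)-g(k-1))^2$ by the standard identity $\sum_k (g(k)-g(k-1))^2 = 2\sum_k g(k)^2 - 2\sum_k g(k)g(k-1)$ (again using the boundary vanishing to equate $\sum_k g(k)^2$ and $\sum_k g(k-1)^2$). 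This yields the stated identity, and the non-negativity is immediate from $p_n \leq 1/2$.

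There is no real obstacle here: the proof is a short direct computation, and the only place where one needs to be slightly careful is to track that the index shift $\sum_k g(k-1)^2 = \sum_k g(k)^2$ is valid over the range $k=0,\ldots,n$ with the convention $g(-1)=g(n)=0$. Everything else is algebra. The structural reason the identity is so clean is that $f$ is a simple linear combination of two translates of $g$, so $f^{q-1}=f$ when $q=2$ makes the sum $\sum_k (g(k-1)-g(k)) f(k)$ a quadratic form in $g$ whose $(1-p_n)$ and $p_n$ parts are interchanged by the index shift; the Bernoulli bias $(1-2p_n)$ appears as exactly the asymmetry between these two contributions.
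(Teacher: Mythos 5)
Your proposal is correct and is essentially identical to the paper's own proof: both start from \eqref{eq:Ttodeal} at $q=2$, substitute $f(k) = (1-p_n)g(k) + p_n g(k-1)$, and collapse the resulting quadratic form via the index shift $\sum_k g(k-1)^2 = \sum_k g(k)^2$ to obtain $(1-2p_n)\sum_k (g(k)-g(k-1))^2$. The only difference is presentational --- you spell out the intermediate identity $\sum_k (g(k)-g(k-1))^2 = 2\sum_k g(k)^2 - 2\sum_k g(k)g(k-1)$, which the paper compresses into the phrase ``by relabelling.''
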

\begin{proof} In \eqref{eq:Ttodeal} we obtain
\begin{align*}
\frac{\partial H_{T,q}}{\partial t} & = - 2 \sum_{k=0}^n \left( g(k-1) - g(k) \right) f(k) \\
& = - 2 \sum_{k=0}^n \left( g(k-1) - g(k) \right) \left( (1-p_n) g(k) + p_n g(k-1) \right) \\
& = 2 \sum_{k=0}^{n}( 1-p_n - p_n)  g(k)^2 - (1-p_n - p_n) g(k) g(k-1),
\end{align*}
which is equal to the term stated in \eqref{eq:T2eq} by relabelling. Note that (curiously) this property will hold for any $g$, including the mass function of any $B_1 + \ldots + B_{n-1}$ (not necessarily with $p_i < 1/2$).
\end{proof}

It may be natural to conjecture that Tsallis (and hence R\'{e}nyi) entropy is increasing for all $q$. However, the following example shows that this property in fact can fail for
$q > 2$ (note that R\'{e}nyi entropy is not concave in the same range -- see \cite[Lemma 4.3]{johnson36}). 
\begin{example}
Consider $n=2$ with $p_1 = 1/2 - \epsilon$ and $p_2 = 1/2$. Direct substitution in \eqref{eq:Ttodeal} gives that the entropy derivative is exactly
\begin{equation} - \frac{q 2^{1-q}}{q-1} \left( \left( \frac{1}{2} - \epsilon \right)^q - \left( \frac{1}{2} + \epsilon \right)^q + 2 \epsilon \right) 
= - \frac{q 2^{2-2q}}{q-1} (2^q - 2 q) \epsilon + O(\epsilon^3),\end{equation}
and we note that $2^q - 2 q \geq 0$ for $q > 2$, so the leading coefficient is negative and so the derivative will be negative for $\epsilon$ sufficiently small.
\end{example}

However, we conjecture that these entropies are increasing for $0 \leq q \leq 2$, since we know that the result holds for $q=0,1,2$:

\begin{conjecture} If
 all $p_i \leq 1/2$ then Tsallis entropy $H_{T,q}(\vc{p})$ and R\'{e}nyi entropy $H_{R ,q}(\vc{p})$ are non-decreasing functions of $\vc{p}$ for $0 \leq q \leq 2$. \end{conjecture}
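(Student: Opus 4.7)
I would follow the template of the proof of Theorem \ref{thm:somon}. By \eqref{eq:Tconcave}, $\partial H_{T,q}/\partial t$ is non-increasing in $t$, so it suffices to prove non-negativity at $p_n=1/2$, where the mixing coefficient machinery of Definition \ref{def:alphadef} applies. The identity $g(k-1)-g(k)=4\beta_k f(k)$ from \eqref{eq:alphavals} turns \eqref{eq:Ttodeal} into
\begin{equation*}
\frac{\partial H_{T,q}}{\partial t} \;=\; -\frac{4q}{q-1}\sum_{k=0}^n \beta_k f(k)^q \;=\; -\frac{q\,2^{1-q}}{q-1}\,T(q),
\end{equation*}
where
\begin{equation*}
T(q) \;:=\; \sum_{k=0}^{n}\bigl(g(k-1)-g(k)\bigr)\bigl(g(k-1)+g(k)\bigr)^{q-1},
\end{equation*}
reducing the conjecture to showing that $T(q)\ge 0$ on $[0,1]$ and $T(q)\le 0$ on $[1,2]$.

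The function $T$ comes equipped with useful boundary data: telescoping gives $T(1)=0$ and $T(2)=\sum(g(k-1)^2-g(k)^2)=0$ (which recovers \eqref{eq:T2eq} at $p_n=1/2$); the corollary $\alpha_k\ge k/n$ of Proposition \ref{prop:dec} yields $T(0)=\sum(2\alpha_k-1)\ge 0$; and Theorem \ref{thm:somon} gives $T'(1)=\sum(g(k-1)-g(k))\log(2f(k))=-\partial H/\partial t\le 0$. My first strategy would be to prove that $T$ is \emph{convex} in $q$ on $[1,2]$ and \emph{concave} on $[0,1]$: the chord inequalities combined with $T(1)=T(2)=0$ and $T(0)\ge 0$ would then deliver the required sign on each interval. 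Concretely one must analyse
\begin{equation*}
T''(q)\;=\;\sum_{k=0}^{n}\bigl(g(k-1)-g(k)\bigr)\bigl(g(k-1)+g(k)\bigr)^{q-1}\bigl(\log(g(k-1)+g(k))\bigr)^{2}
\end{equation*}
and pin down its sign using the unimodality of $g$.

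As a fallback I would try to generalise the $S_{r,p}$ machinery of Proposition \ref{prop:SrpIncreasing}. Abel summation using $g(-1)=g(n)=0$ gives $T(q)=2^{q-1}\sum_{k=0}^{n-1}g(k)(f(k+1)^{q-1}-f(k)^{q-1})$; substituting $f(k+1)/f(k)=(1-\alpha_k)/\alpha_{k+1}$ and expanding in powers of $2\alpha_k-1$ reduces the problem to bounding weighted odd moments of the form $\sum f(k)^{q-1}\beta_k^{2r+1}$, which one would attack by iterated integration by parts via Lemma \ref{lem:intparts}. The main obstacle in both strategies is the loss of structure when the weight $f(k)$ of Proposition \ref{prop:key} is replaced by $f(k)^{q-1}$: the $T''$ sum is alternating across $\mathrm{mode}(g)$ with no manifest sign, and the clean product structure $g(k)A_p(k)B_p(k)$ driving Proposition \ref{prop:SrpIncreasing} does not survive when the inner weight ceases to be linear in $f$. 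I expect the most promising route is to combine first-order expansions around $q=1$ and $q=2$ (where Theorem \ref{thm:somon} and \eqref{eq:T2eq} already pin down the sign) with a convexity-in-$q$ interpolation, perhaps exploiting the log-concavity of $f$ from \cite{niculescu} to force the correct sign of $T''$.
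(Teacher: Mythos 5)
This statement is the one result in the paper that is \emph{not} proved: it is left as an open conjecture, with the authors establishing only the endpoint cases $q=0,1,2$ (via \eqref{eq:Ttodeal}, Theorem \ref{thm:somon} and \eqref{eq:T2eq}) and a sufficient condition, namely that \eqref{eq:TsallisBetaCondition} for all $r\ge 1$ implies $\partial H_{T,q}/\partial t\ge 0$, obtained by adding the telescoping sum $\frac{1}{q-1}\sum_k\bigl(g(k-1)^q-g(k)^q\bigr)=0$ to \eqref{eq:Ttodeal} and expanding the resulting function $\psi_q$ in odd powers of $2\beta_k$, whose coefficients have one sign for $0<q<2$. Your reduction is correct as far as it goes: $g(k-1)-g(k)=4\beta_k f(k)$ at $p_n=1/2$, the reformulation in terms of $T(q)$, and the boundary data $T(1)=T(2)=0$, $T(0)\ge 0$, $T'(1)=-\partial H/\partial t\le 0$ all check out, and the interpolation-in-$q$ viewpoint genuinely differs from the paper's moment expansion. (One small discrepancy: your weighted moments carry $f(k)^{q-1}$ where the paper's condition \eqref{eq:TsallisBetaCondition} carries $f(k)^{q}$; the paper's telescoping correction is precisely what removes the linear-in-$\beta_k$ term, which in your notation is the statement that $T(q)$ itself, rather than its cubic-and-higher part, has no evident sign.)

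The proposal is not a proof, and the gap is fatal rather than merely technical: the pivotal claim that $T$ is concave on $[0,1]$ and convex on $[1,2]$ is not only unproved but false. Take $n=2$, $p_1=1/4$, $p_2=1/2$, so $g(0)=3/4$, $g(1)=1/4$ and $T(q)=\tfrac12+4^{-q}-(3/4)^q$. Then
\begin{equation*}
T''(1)=\tfrac14\bigl(\log 4\bigr)^2-\tfrac34\bigl(\log\tfrac43\bigr)^2\approx 0.42>0,
\end{equation*}
so $T$ is strictly convex at $q=1$ and cannot be concave on $[0,1]$, even though $T\ge 0$ there (the conjecture itself holds in this example; it is only your chord argument that collapses, because $T(1)=0$ with $T'(1)<0$ permits $T\ge 0$ to the left without global concavity). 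Note also that any termwise control of $T''$ is doomed, since its required sign would have to flip at $q=1$ while the summands vary continuously in $q$. The fallback route does not close the gap either: Lemma \ref{lem:intparts} and the product structure $g(k)A_p(k)B_p(k)$ of Proposition \ref{prop:SrpIncreasing} rely on the exact identities \eqref{eq:alphavals}--\eqref{eq:alphavals2}, i.e.\ on the summation weight being $g(k)$, and you supply no analogue for the weight $f(k)^{q}$ appearing in \eqref{eq:TsallisBetaCondition}. What remains after the dust settles is a correct reformulation plus the three values $q=0,1,2$ already covered by the paper; the range $0<q<2$, $q\ne 1$, is still open.
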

We use an argument similar to that which gave Proposition \ref{prop:key} to give a moment-based condition related to this 
conjecture.

\begin{proposition}
Let us fix $0<q<2$. If, for all $r \geq 1$,
\begin{equation} \label{eq:TsallisBetaCondition}
\sum_{k=0}^{n} f(k)^q \beta_k^{2r+1} \leq 0,
\end{equation}
then $\frac{\partial H_{T,q}}{\partial t} \geq 0$ holds.
\end{proposition}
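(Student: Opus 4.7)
The plan is to mimic the structure of Proposition \ref{prop:key} --- expressing $\partial H_{T,q}/\partial t$ as a linear combination of the odd moments $\sum_k f(k)^q \beta_k^{2r+1}$ with non-positive coefficients --- but using only the elementary binomial expansion of $(1 \pm x)^q$ in place of the transcendental $\psi$. First I would substitute the mixing identities $g(k-1) = (1+2\beta_k) f(k)$ and $g(k) = (1-2\beta_k) f(k)$ into \eqref{eq:Ttodeal}; since $g(k-1) - g(k) = 4 \beta_k f(k)$ the factor $f(k)^{q-1}$ absorbs one $f(k)$ and the derivative collapses to
\[ \frac{\partial H_{T,q}}{\partial t} = -\frac{4q}{q-1} \sum_{k=0}^n f(k)^q \beta_k. \]
The task is then to control this first centred $f^q$-moment using only the higher odd ones.

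The bridge is a telescoping identity that plays the role played by $\sum_k f(k) \alpha_k = 1/2$ in the proof of Proposition \ref{prop:key}. Since $g(-1)^q = g(n)^q = 0$, we have $\sum_{k=0}^n (g(k-1)^q - g(k)^q) = 0$. Using the same substitutions, each summand equals $f(k)^q \left( (1+2\beta_k)^q - (1-2\beta_k)^q \right)$, and the odd-only binomial series
\[ (1+x)^q - (1-x)^q = 2 \sum_{j=0}^\infty \binom{q}{2j+1} x^{2j+1} \]
allows us, after Fubini, to rewrite the vanishing sum as
\[ 0 = 4q \sum_{k=0}^n f(k)^q \beta_k + \sum_{j=1}^\infty 2^{2j+2} \binom{q}{2j+1} \sum_{k=0}^n f(k)^q \beta_k^{2j+1}. \]
Solving for the first moment and substituting back yields the target formula
\[ \frac{\partial H_{T,q}}{\partial t} = \sum_{j=1}^\infty \frac{2^{2j+2}}{q-1} \binom{q}{2j+1} \sum_{k=0}^n f(k)^q \beta_k^{2j+1}. \]

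All that remains is a sign check on the outer coefficients, together with a justification of Fubini. After cancelling $(q-1)$ from numerator and denominator, the $j$-th coefficient is proportional to $q(q-2)(q-3) \cdots (q-2j)/(2j+1)!$; for $0 < q < 2$ and $j \geq 1$ the $2j-1$ factors from $q-2$ through $q-2j$ are all strictly negative while $q > 0$, so this coefficient is negative. Combined with the hypothesis $\sum_k f(k)^q \beta_k^{2j+1} \leq 0$, each term of the series is then non-negative, delivering the conclusion with no case-split around $q=1$ needed. The main technical obstacle --- and the step I would spell out most carefully --- is Fubini at the boundary indices $k=0$ and $k=n$, where $|2\beta_k| = 1$; this is handled by observing that $\sum_{j \geq 0} |\binom{q}{2j+1}|$ is finite for $0<q<2$ (the signs of $\binom{q}{2j+1}$ stabilise from $j=1$ onward, and Abel's theorem at $x=1$ yields the finite total $2^{q-1}$), so absolute convergence of the double sum over the finite $k$-range is automatic.
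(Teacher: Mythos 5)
Your proof is correct and follows essentially the same route as the paper: both hinge on the vanishing telescoping sum $\sum_{k=0}^n\bigl(g(k-1)^q-g(k)^q\bigr)=0$, the substitution $g(k-1)=2\alpha_k f(k)$, $g(k)=2(1-\alpha_k)f(k)$, and the observation that $\binom{q}{2j+1}/(q-1)$ is negative for $0<q<2$ and $j\geq 1$. The only difference is bookkeeping --- the paper folds the telescoping term directly into the derivative and packages the summand as $f(k)^q\psi_q(2\beta_k)$ with a power series having negative odd coefficients, whereas you solve the telescoping identity for the first moment and substitute back; your explicit justification of absolute convergence at the boundary indices where $|2\beta_k|=1$ is a detail the paper leaves implicit.
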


\begin{proof}
We first add a telescoping sum in equation~\eqref{eq:Ttodeal}:
\begin{eqnarray*}
\frac{\partial H_{T,q}}{\partial t} & = & - \frac{q}{q-1} \sum_{k=0}^n \left( g(k-1) - g(k) \right) f(k)^{q-1} + \frac{1}{q-1} \sum_{k=0}^n g(k-1)^q-g(k)^q \\
&=& -\frac{1}{q-1} \sum_{k=0}^n f(k)^q \left[\left(\frac{g(k)}{f(k)}\right)^q-\left(\frac{g(k-1)}{f(k)}\right)^q+q \left(\frac{g(k)}{f(k)}-\frac{g(k-1)}{f(k)} \right)\right] \\
&=& -\frac{1}{q-1} \sum_{k=0}^n f(k)^q \left[\left(2(1-\alpha_k)\right)^q-\left(2 \alpha_k \right)^q+q \left(2(1-\alpha_k)-2 \alpha_k \right)\right] \\
&=& -\frac{1}{q-1} \sum_{k=0}^n f(k)^q \left[\left(1-2\beta_k \right)^q-\left(1+2\beta_k \right)^q+4 \beta_k q \right] \\
&=& \sum_{k=0}^n f(k)^q \psi_q( 2 \beta_k),
\end{eqnarray*}
where, using the binomial theorem, the function $\psi_q$ can be expressed as
\begin{equation}
\psi_q(x) = -\frac{(1-x)^q-(1+x)^q+2qx}{q-1} = \sum_{r=1}^\infty \left ( q \prod_{i=2}^{2r}(q-i) \right) \frac{1}{(2r+1)!} x^{2r+1}. 
\end{equation}
From the assumption $0<q<2$, it follows that $q \prod_{i=2}^{2r}(q-i) < 0$. The proof is completed as in Proposition~\ref{prop:key}.
\end{proof}

\section*{Acknowledgments}

Oliver Johnson would like to thank the Isaac Newton Institute for Mathematical Sciences, Cambridge, for support and hospitality during the workshop {\em Beyond I.I.D. in information theory} where work on this paper was undertaken. This work was supported by EPSRC grant no {\tt EP/K032208/1}.

\appendix

\section{Proof of Proposition \ref{prop:dec}} \label{sec:decpf}

\begin{proof}
Using $\alpha_{k+1}= g(k)/(g(k) + g(k+1))$, we can express the difference
$$ \alpha_{k+1} - \alpha_k = \frac{ g(k)^2 - g(k+1) g(k-1)}{ (g(k+1) + g(k))(g(k) + g(k-1))} = \frac{D_g(k)}{ (g(k+1) + g(k))(g(k) + g(k-1))},$$
so the property is equivalent to
\begin{equation} \label{eq:tocontrol} (g(k+2) + g(k+1)) D_g(k)  \geq (g(k-1) + g(k)) D_g(k+1), \end{equation}
where we write $D_g(k) = g(k)^2 - g(k+1) g(k-1)$.

We write $g^{(i)}$ for the mass function of $\sum_{j \neq i} B_i$, the sum of the first $(n-1)$ Bernoulli variables with the $i$th one omitted,
and write $D^{(i)}(k) = \left( g^{(i)}(k) \right)^2 - g^{(i)}(k+1) g^{(i)}(k-1)$ and $E^{(i)}(k)
=  g^{(i)}(k) g^{(i)}(k+1) - g^{(i)}(k+2) g^{(i)}(k-1)$.

The cases $q=1$ and $q=2$ of \cite[Lemma A1]{johnson34} give that
\begin{align}
g(k) g(k-1) & = \sum_{i=1}^{n-1} p_i (1-p_i) D^{(i)}(k-1), \\
2 g(k-1) g(k+1) & = \sum_{i=1}^{n-1} p_i (1-p_i) E^{(i)}(k-1), 
\end{align}
and by direct substitution we deduce that 
\begin{eqnarray}
\lefteqn{
g(k)^2 g(k+1)  + g(k-1) g(k) g(k+2) - 2 g(k-1) g(k+1)^2 } \nonumber \\ 
& = & \sum_{i=1}^{n-1} p_i (1-p_i)
\left( g(k) D^{(i)}(k) + g(k+2)  D^{(i)}(k-1) - g(k+1) E^{(i)}(k-1) \right) \nonumber \\
& =&  \sum_{i=1}^{n-1} p_i (1-p_i) \left( (1-p_i) \frac{  \left( D^{(i)}(k) \right)^2 - D^{(i)}(k-1) D^{(i)}(k+1)}
{g^{(i)}(k)} \right), \label{eq:form1}
\end{eqnarray}
where the second equality follows directly by taking $g(k) = (1-p_i) g^{(i)}(k) + p_i g^{(i)}(k-1)$ for each $i$ and simplifying inside the bracket. Similarly, we can find 
\begin{eqnarray}
\lefteqn{
g(k) g(k+1)^2  + g(k-1) g(k+1) g(k+2) - 2 g(k)^2 g(k+2) } \nonumber \\ 
& = & \sum_{i=1}^{n-1} p_i (1-p_i)
\left( g(k+1) D^{(i)}(k) + g(k-1)  D^{(i)}(k+1) - g(k) E^{(i)}(k) \right) \nonumber \\
& =&  \sum_{i=1}^{n-1} p_i (1-p_i) \left( p_i \frac{  \left( D^{(i)}(k) \right)^2 - D^{(i)}(k-1) D^{(i)}(k+1)}
{g^{(i)}(k)} \right). \label{eq:form2}
\end{eqnarray}
Comparing \eqref{eq:form1} and \eqref{eq:form2}, using the fact that $ \left( D^{(i)}(k) \right)^2 - D^{(i)}(k-1) D^{(i)}(k+1) \geq 0$ (see \cite{branden} or 
\cite[Section 2.2]{johnson36}) we deduce that if all $p_i \leq 1/2$ then
\begin{eqnarray*}
\lefteqn{
 g(k)^2 g(k+1)  + g(k-1) g(k) g(k+2) - 2 g(k-1) g(k+1)^2 } \\
& \geq & g(k) g(k+1)^2  + g(k-1) g(k+1) g(k+2) - 2 g(k)^2 g(k+2),
\end{eqnarray*}
which by rearranging is precisely \eqref{eq:tocontrol} above. 
\end{proof}

\section{Proof of technical lemmas} \label{sec:technical}

\begin{proof}[Proof of Lemma \ref{lem:nablaBneg}] Given $j \geq 0$, each sequence $(\beta_{k+1}-\beta_{k-j})_{k}$ is non-negative and non-increasing, so any product of such sequences is also non-increasing. 
In more detail, the 
$$ \nabla B_p(k) = B_p(k) - B_p(k-1) = \prod_{j=0}^{p-1} (\beta_{k+1}-\beta_{k-j}) - \prod_{j=0}^{p-1} (\beta_{k}-\beta_{k-j-1}), $$
and for each $j \geq 0$, the $ (\beta_{k+1}-\beta_{k-j}) \leq (\beta_{k}-\beta_{k-j-1})$ because Proposition \ref{prop:dec} gives
$(\beta_{k+1}-\beta_{k}) \leq (\beta_{k-j}-\beta_{k-j-1})$, and each term in the product is positive by \eqref{eq:alphamon}. Further, each of these terms is well-defined since $k-j - 1 \geq k - p \geq 0$.
\end{proof}

\begin{proof}[Proof of Lemma \ref{lem:intparts}] Using the facts that $g(n) = 0$ and that $A_{p+1}(0) = 0$, we write the RHS of \eqref{eq:IPP} as
\begin{eqnarray*}
\lefteqn{
 \sum_{k=p+1}^{n-1} A_{p+1}(k) g(k) v(k) - \sum_{k=p+1}^{n} A_{p+1}(k) g(k) v(k-1) } \\
& =& \sum_{k=p}^{n-1} v(k) \biggl( g(k) A_{p+1}(k) - g(k+1) A_{p+1}(k+1) \biggr) \\
&=& \sum_{k=p}^{n-1} g(k)  A_p(k) v(k)  \left( \alpha_{k-p} - \alpha_{k+1} \frac{g(k+1)}{g(k)} \right).
\end{eqnarray*}
Here, we use the facts that by definition $A_{p+1}(k) = A_p(k) \alpha_{k-p}$ and $A_{p+1}(k+1) = A_p(k) \alpha_{k+1}$.
We conclude by substituting for $\alpha_{k+1} g(k+1)$ using \eqref{eq:alphavals2}, and noting that $ \alpha_{k+1} + \alpha_{k-p} - 1 = \beta_{k+1} + \beta_{k-p}$. 
\end{proof}

To prove Lemma \ref{lem:QnpIdentity}, we observe that
equivalently, the family of polynomials \\ $(Q_{n,p}(X_1,\ldots,X_p))_{n \geq 0}$ can be defined using the generating function:
\begin{equation}\label{eq:QnpGenerating}
\sum_{n=0}^\infty Q_{n,p}(X_1,\ldots,X_p) t^n = \frac{1}{(1-tX_1)(1- t X_2) \ldots (1-tX_p)}
\end{equation}

\begin{proof}[Proof of Lemma \ref{lem:QnpIdentity}] We first notice by direct calculation that
\begin{eqnarray*}
\frac{1}{(1-tX_2) \ldots (1-tX_{p+1})}-\frac{1}{(1-tX_1) \ldots (1-tX_p)}
&=& \frac{ t(X_{p+1}-X_1)}{ (1-tX_1) \ldots (1-tX_{p+1})}.
\end{eqnarray*}  
Using equation~\eqref{eq:QnpGenerating}, we thus have 
\begin{eqnarray*}
\lefteqn{
\sum_{n=0}^\infty Q_{n,p}(X_2,\ldots,X_{p+1}) t^n-\sum_{n=0}^\infty Q_{n,p}(X_1,\ldots,X_p) t^n} \\
 &=& (X_{p+1}-X_1)\sum_{n=0}^\infty Q_{n,p+1}(X_1,\ldots,X_{p+1}) t^{n+1} \\
&=& (X_{p+1}-X_1)\sum_{n=1}^\infty Q_{n-1,p+1}(X_1,\ldots,X_{p+1}) t^{n},
\end{eqnarray*}
which yields equation~\eqref{eq:QnpIdentity} for $n \geq 1$. The case $n=0$ is obvious. \end{proof}

\section{Heuristics in continuous case}

We now explain some calculations  in the continuous case that helped us to find a rigorous proof of Theorem~\ref{thm:somon}, and that help suggest our conjecture about Renyi and Tsallis entropies. We remark that Ordentlich \cite{ordentlich} used the original paper of Shepp and Olkin \cite{shepp} to motivate conjectures concerning continuous random variables.

Let us consider a density function $f(x)$, defined for $x \in \re$, which is assumed to be everywhere positive, smooth and with all derivatives well-behaved at $\pm \infty$. This density will serve as a continuous analogue of both the mass functions $(f(k))_k$ and $(g(k))_k$. As a consequence, one could also see the function $\frac{1}{2}-\frac{\log(f)'(x)}{4}$ (resp. $-\frac{\log(f)'(x)}{4}$) as continuous analogues of the family $(\alpha)_k$ (resp. $(\beta)_k$). We will make the assumptions that $(\log f)'' \leq 0$ and $(\log f)''' \geq 0$, which correspond to the property $\alpha_k \leq \alpha_{k+1}$ and $\alpha_k -2 \alpha_{k+1}+\alpha_{k+2} \leq 0$. We now prove the continuous version of equation~\eqref{eq:oddpower} (in the case where $q=1$) and~\eqref{eq:TsallisBetaCondition} (for $q \neq1$):

\begin{proposition} Suppose that
$(\log f)'' \leq 0$ and $(\log f)''' \geq 0$ then for every real parameter $q>0$ and every integer $r \geq 1$ we have:
\begin{equation*}
\int_\re f(x)^q (\log f(x)')^{2r+1} dx \geq 0.
\end{equation*}
\end{proposition}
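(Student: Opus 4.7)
The plan is to introduce $\beta(x) := -(\log f)'(x)/4$, the continuous analogue of $\beta_k$ already anticipated in the text, and to reduce the claim to
\begin{equation*}
\int_{\re} f(x)^q \beta(x)^{2r+1}\, dx \leq 0 \qquad (r \geq 1,\ q > 0),
\end{equation*}
which is equivalent to the stated inequality because $((\log f)'(x))^{2r+1} = -4^{2r+1} \beta(x)^{2r+1}$. Under this change of variable the hypotheses $(\log f)'' \leq 0$ and $(\log f)''' \geq 0$ translate, respectively, into $\beta' \geq 0$ and $\beta'' \leq 0$, the continuous avatars of the monotonicity of $(\beta_k)$ and of $(\beta_{k+1}-\beta_k)$.

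The calculation that drives everything else is that $f' = (\log f)' f = -4 f \beta$, hence $(f^q)' = -4q f^q \beta$, so $f^q \beta = -(f^q)'/(4q)$. Integrating by parts (the boundary terms at $\pm\infty$ vanishing thanks to the assumed decay of $f$ and its derivatives) yields, for every $a \geq 1$ and $b \geq 0$, the key identity
\begin{equation*}
\int_{\re} f^q \beta^a (\beta')^b \, dx
= \frac{a-1}{4q} \int_{\re} f^q \beta^{a-2} (\beta')^{b+1}\, dx
+ \frac{b}{4q} \int_{\re} f^q \beta^{a-1} (\beta')^{b-1} \beta''\, dx,
\end{equation*}
where the second summand is omitted when $b = 0$. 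This is the continuous counterpart of the discrete identity obtained by combining \eqref{eq:NablaProduct} with the integration by parts in Lemma \ref{lem:intparts}.

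The proof is then an induction on odd $a$ showing that $\int_{\re} f^q \beta^a (\beta')^b\, dx \leq 0$ for every odd $a \geq 1$ and every $b \geq 0$. For the base case $a = 1$, the first term disappears; if $b=0$ we get $-[f^q]_{-\infty}^{\infty}/(4q)=0$, and if $b\geq 1$ the surviving integrand $f^q (\beta')^{b-1}\beta''$ is non-positive because $(\beta')^{b-1} \geq 0$ and $\beta'' \leq 0$. For the inductive step, $a-2$ is still odd so the first term is $\leq 0$ by the induction hypothesis; the second term's integrand has $\beta^{a-1}\geq 0$ (since $a-1$ is even), together with $(\beta')^{b-1}\geq 0$ and $\beta''\leq 0$, so it too is $\leq 0$; and both coefficients are non-negative. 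Setting $(a,b) = (2r+1,0)$ for $r \geq 1$ delivers the claim.

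The chief obstacle, in my view, is spotting the right induction invariant: one has to carry a power of $\beta'$ alongside the power of $\beta$, because each integration by parts trades a factor of $\beta$ either for an additional $\beta'$ (propagating the induction) or for a $\beta''$ accompanied by a now even exponent on $\beta$ (terminating it). Once this is in place the argument is considerably lighter than its discrete counterpart; the combinatorial apparatus of $A_p,B_p,Q_{n,p}$ and the monotone sequence $(S_{r,p})_p$ collapses, since a difference product $\prod_{j=0}^{p-1}(\beta_{k+1}-\beta_{k-j})$ degenerates in the continuum to a constant multiple of $(\beta')^p$. A remaining technical point is the verification that all boundary terms vanish at $\pm\infty$ at every stage of the induction, which is precisely where the well-behavedness hypothesis on $f$ enters.
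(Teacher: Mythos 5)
Your argument is correct and is essentially the paper's own proof in different clothing: the same integration by parts built on $f^q(\log f)'=(f^q)'/q$ followed by the product rule and the same sign analysis from $(\log f)''\le 0$, $(\log f)'''\ge 0$, with your induction on the odd exponent $a$ (carrying the auxiliary power $b$ of $\beta'$) being exactly the paper's monotone chain $I_{r,0}\ge I_{r,1}\ge\cdots\ge I_{r,r}$ rewritten in the variable $\beta=-(\log f)'/4$. If anything, your base case $a=1$ (where the $(a-1)$-term vanishes and the remaining integrand is manifestly non-positive) treats the terminal step more cleanly than the paper's claim that $A_r=0$ forces $I_{r,r}=0$, which as literally stated needs the extra half-step you supply.
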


\begin{proof} For any $0 \leq p \leq r$ we set
\begin{equation*}
I_{r,p} = (-1)^p \frac{A_p}{q^p} \int_\re f(x)^q \left(\log f(x)' \right)^{2r-2p+1} \left(\log f(x)'' \right)^p dx,
\end{equation*}
with $A_0 = 1$ and $A_p = \prod_{k=0}^{p-1} (2r-2k)$ for $p \geq 1$. In particular $A_r = 0$ so $I_{r,r} = 0$. We now prove that the sequence $(I_{r,p})$ is non-increasing in $p$: for every $0 \leq p \leq r-1$,
using the fact that $f^q (\log f)' = f^q f'/f = f^{q-1} f' = (f^q)'/q$
 we have:
\begin{eqnarray*}
I_{r,p} &=& (-1)^p \frac{A_p}{q^{p+1}} \int_\re (f(x)^q)' (\log f(x)')^{2r-2p} (\log f(x)'')^p dx \\
&=& (-1)^{p+1} \frac{A_p}{q^{p+1}} (2r-2p) \int_\re f(x)^q  (\log f(x)')^{2r-2p-1} (\log f(x)'')^{p+1} dx \\
&& + (-1)^{p+1} \frac{A_p}{q^{p+1}} p \int_\re f(x)^q (\log f(x)')^{2r-2p} (\log f(x)'')^{p-1} \log f(x)''' dx.
\end{eqnarray*}
Here, again we apply integration by parts, followed by the product rule.
The first integral is exactly $I_{r,p+1}$, and the second one is non-negative because of the assumptions on $\log f''$ and $\log f'''$. We thus have $I_{r,p} \geq I_{r,p+1}$. We thus have
\begin{equation*}
\int_\re f(x)^q (\log f(x)')^{2r+1} dx = I_{r,0} \geq I_{r,r} = 0,
\end{equation*}
which proves the result.
\end{proof}


\end{document}